\documentclass[12pt]{amsart}
\usepackage{a4wide}
\usepackage{psfrag}
\usepackage{graphicx}
\usepackage{verbatim}
\usepackage{amssymb}
\usepackage{todonotes}
\usepackage[makeroom]{cancel}

\numberwithin{equation}{section}

\newtheorem{lemma}{Lemma}[section]

\newtheorem{thm}{Theorem}

\theoremstyle{definition}

\newtheorem{defi}[lemma]{Definition}
\theoremstyle{remark}
\newtheorem{rem}{Remark}

\newcommand{\T}{\mathbb{T}}
\newcommand{\la}{\lambda}

\newcommand{\vf}{\varphi}

\newcommand{\ve}{\varepsilon}

\newcommand{\Z}{\mathbb{Z}}

\newcommand{\R}{\mathbb{R}}

\title[Schr\"odinger cocycles over expanding circle endomorphisms]{Positive Lyapunov exponent for some Schr\"odinger cocycles over strongly expanding circle endomorphisms}

\author{Kristian Bjerkl\"ov}
\email{bjerklov@kth.se}
\address{Department of Mathematics, KTH Royal Institute of Technology, 100 44 Stockholm, Sweden}

\begin{document}

\begin{abstract}

We show that for a large class of potential functions and big coupling constant $\lambda$ the Schr\"odinger cocycle over the expanding map
$x\mapsto bx ~( \text{mod} 1)$ on $\T$ has a Lyapunov exponent $>(\log\lambda)/4$ for all energies, provided that the integer $b\geq \la^3$.   
\end{abstract}

\maketitle
\section{Introduction}
Let $\T=\R/\mathbb{Z}$ and let $T:\T\to\T$ be the expanding map $T(x)=bx ~( \text{mod} 1)$, where $b\geq 2$ is an integer.
In this note we consider the Schr\"odinger cocycle on $\T\times \R^2$ defined by
$$
F_E:(x,y)\mapsto (T(x),A_E(x)y)
$$
where  
$$
A_E(x)=\left(\begin{matrix} 0 & 1 \\ -1 & \lambda v(x)-E
\end{matrix}
\right)\in SL(2,\R)
$$
and $v:\T\to\R$ is a continuous function, $\lambda\in\mathbb{R}$ is a coupling constant and $E\in\R$ is the energy parameter.

We let\footnote{We are interested in the time-evolution of $F_E$ for fixed $\la, v$ and $b$. 
Therefore we only indicate the dependence on $E$ and $n$.} 
$$
A^n_E(x)= A_E(T^{n-1}(x))\cdots A_E(x), ~ n\geq 1,
$$
and define the (maximal) Lyapunov exponent by
$$
L(E)=\lim_{n\to\infty}\frac{1}{n}\int_\T\log\|A_E^n(x)\|dx ~(\geq 0).
$$
Recall that the Lebesgue measure on $\T$ is an invariant measure for $T$. Since $T$ is ergodic with respect to this measure we have
\begin{equation}\label{pointwise}
\lim_{n\to\infty}\frac{1}{n}\log\|A_E^n(x)\|=L(E) \text{ for a.e. } x\in\T.
\end{equation}

For the important connection to the discrete Schr\"odinger operator we refer to the articles \cite{cs,d,dk,s} and references therein.

A natural question is to ask under which conditions (on $\la,v,b$ and $E$) we have $L(E)>0$. Here we are especially interested in conditions on $\la,v$ and $b$ which guarantees
$L(E)>\text{const.}>0$ for all $E\in\R$. Besides the problem in itself, 
which has a general interest in the theory of non-uniformly hyperbolic dynamical systems, such uniform lower bounds are many times important for deriving finer properties of
the associated Schr\"odinger operator (see, e.g., \cite{bs}).

Next follows a brief summary of previous results. It should be stressed that all the results hold for any $b\geq 2$.

In \cite{dk} it is shown that if $v$ is measurable, bounded and non-constant, and $\la>0$, then
$L(E)>0$ for a.e. $E\in\R$. Moreover, for small $\lambda$ and smooth non-constant $v$ one has $L(E)\approx \la^2$ for $\sqrt{\la}<|E|<2-\sqrt{\la}$ \cite{cs}; 
and for large $\lambda$, and under quite general conditions on $v$, one has $L(E)\gtrsim \log\lambda$ for all $E$ outside an exponentially small (in $\la$) set \cite{shs,s}. 
 
Furthermore, from Herman's subharmonic argument \cite{h} it follows that if $v$ is a non-constant trigonometric polynomial, 
then $L(E)\gtrsim \log\lambda$ for all $E\in \R$ and all large $\lambda$. However, whether the corresponding result holds for $v$ a non-constant real-analytic
function does not seem to be known (if instead $T(x)=x+\omega$, $\omega\in\mathbb{R}\setminus \mathbb{Q}$, this is a well-known result \cite{ss}). 
Steps towards a proof of this result are taken in \cite{m}.
Another situation where one has $L(E)\gtrsim \log\lambda$ for all $E\in \R$ and all large $\lambda$ is when $v$ is $C^1$ and monotone on $(0,1)$, 
with a discontinuity at $x=0$ \cite{z}.

On the other hand, if $\varphi:\T\to\mathbb{R}$ is any bounded and measurable function such that $\int_{\T}\varphi(x)dx=0$, and 
we let $v(x)=\exp(\varphi(T(x)))+\exp(-\varphi(x))$ and take $\la=1$, then $L(0)=0$ (see \cite{b2}). Thus there are obstacles for obtaining uniform (in $E$) lower bounds
on $L(E)$.

The aim of this paper is to extend the above results to new situations. We first define the collection of potential functions $v$  with which we shall work.
\begin{defi}\label{def_v}
Let $\mathcal{V}^1(\T,\R)$ denote the class of $C^1$-functions $v:\T\to\mathbb{R}$ which satisfy the following condition: there exist $\ve_0>0, \beta>0$ and an integer $s\geq 1$ 
such that for all $0<\ve\leq \ve_0$ and all $a\in\R$ the set $\{x\in\T: |v(x)-a|<\ve\}$ consists of at most $s$ intervals, each of length at most 
$\ve^{\beta}$.
\end{defi}
\begin{rem}It is easy to check that all non-constant real-analytic functions $v:\T\to\R$ belongs to  $\mathcal{V}^1(\T,\R)$.
Note also that the assumption on $v$ is very similar to \cite[Definition 2.2]{k}.
\end{rem}

Our main result is the following:
\begin{thm}
Assume that $v\in\mathcal{V}^1(\T,\R)$. Then there is a $\lambda_0=\lambda_0(v)>0$ such that for all $\lambda\geq \lambda_0$ we have
$L(E)>(\log\la)/4$ for all $E\in \R$ provided that $b\geq \la^3$.

\end{thm}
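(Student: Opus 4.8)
The plan is to reduce the operator-norm growth to a scalar difference equation and then to track the evolution of slopes under the projective action of the cocycle. First I would dispose of the trivial regime: taking $\la\ge\la_0$, if $|E|>(\|v\|_\infty+1)\la$ then $|\la v(x)-E|\ge |E|-\la\|v\|_\infty>\la$ for every $x$, so $A_E(x)$ is uniformly hyperbolic with a common invariant cone and expansion $\ge \la/2$, whence $L(E)\ge\log(\la/2)>(\log\la)/4$ directly. Thus it suffices to treat $|E|\le(\|v\|_\infty+1)\la$, where $a:=E/\la$ stays in the fixed compact set $[-(\|v\|_\infty+1),\|v\|_\infty+1]$. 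Writing $t_j=t_j(x)=\la v(T^jx)-E$, the second component of $A^n_E(x)(0,1)^{\top}$ is the solution $q_j$ of the Schr\"odinger recursion $q_{j+1}=t_jq_j-q_{j-1}$ with $q_0=1,\,q_1=t_0$, so that $\|A^n_E(x)\|\ge|q_n(x)|$. Setting $w_j=q_j/q_{j-1}$ one has $w_{j+1}=t_j-1/w_j=t_j\bigl(1-1/(t_jw_j)\bigr)$, hence the telescoping identity
\[
\log|q_n(x)|=\sum_{j=0}^{n-1}\log|t_j(x)|+\sum_{j=1}^{n-1}\log\Bigl|1-\frac{1}{t_j(x)\,w_j(x)}\Bigr|.
\]
The theorem reduces to integrating this over $\T$ and bounding the two sums from below.

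For the main term I would use the defining property of $\mathcal V^1(\T,\R)$. Since $T$ preserves Lebesgue measure, $\int_\T\log|t_j(x)|\,dx=\log\la+\int_\T\log|v(x)-a|\,dx$, and the level-set bound $|\{|v-a|<\ve\}|\le s\ve^{\beta}$ (valid for $\ve\le\ve_0$) gives, via $\int_\T\log^-|v-a|\,dx=\int_0^\infty|\{|v-a|<e^{-u}\}|\,du\le \log(1/\ve_0)+\tfrac{s}{\beta}\ve_0^{\beta}$, a lower bound $\int_\T\log|v-a|\,dx\ge -C_0$ that is uniform in $a$ over the relevant compact set (the positive part being controlled by $|v-a|\le 2\|v\|_\infty+1$). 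Hence each main term contributes at least $\log\la-C_0$, and $\int_\T\sum_{j=0}^{n-1}\log|t_j|\,dx\ge n(\log\la-C_0)$.

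The crux is the correction $\sum_j\log|1-1/(t_jw_j)|$, whose only harmful (negative) contributions arise from the event $t_j(x)w_j(x)\approx1$, that is, the slope $w_j(x)$ sitting near the contracting direction $1/t_j$ of $A_E(T^jx)$. Here the hypothesis $b\ge\la^3$ is essential, through a decorrelation between ``past'' and ``present''. On a monotonicity branch $J$ of $T^j$, on which $T^j:J\to\T$ is an affine bijection, the point $T^{j-1}x$ ranges only over an interval of length $1/b$, so $w_j(x)$ — a function of the history $v(T^{j-1}x),v(T^{j-2}x),\dots$ — varies by at most $O(\la/b)$, whereas $t_j(x)=\la v(T^jx)-E$ sweeps the full range of $\la v-E$. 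Freezing $w_j\approx\bar w$ and applying the $\mathcal V^1$ level-set bound to the sweeping variable, I would estimate, for each threshold $d$, that $|\{x\in J:|t_jw_j-1|<d\}|\lesssim s\,(d/(\la|\bar w|))^{\beta}$, and then integrate $\log(1/d)$ against $d$ to get a per-step loss of size $O\!\bigl(s(\la|\bar w|)^{-\beta}\bigr)$. Away from the ``elliptic'' set $B=\{x:|\la v(x)-E|<\delta\}$, $\delta:=\sqrt\la$, one has $|w_j|\ge\delta$ (the slope being small only immediately after the orbit visits $B$, a set of density $\lesssim|B|\le s\la^{-\beta/2}$), so there the per-step loss is $O(s\la^{-3\beta/2})$; summed over $j$ and added to the $O(n|B|)$ contribution of the rare small-slope times, the total correction is $o(n\log\la)$. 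Combined with the previous paragraph this gives $L(E)\ge\log\la-C_0-o(1)$, which exceeds $(\log\la)/4$ once $\la\ge\la_0(v)$; the factor $1/4$ is a safe margin against the loss budget in this step.

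The main obstacle is exactly this correction estimate: making rigorous that $w_j$ is effectively constant across a branch of $T^j$ \emph{on the scale of the window} $\{t_jw_j\approx1\}$. This forces one to quantify the Lipschitz dependence of the slope recursion (whose constant is polynomial in $\la$, because of the factor $\la v$ and the small denominators $1/w$ in the dangerous regime where $|w|\sim1/\la$) and to check that the expansion rate $b$ dominates it with room to spare: the relative variation of the target $1/w_j$ across a branch is $O(\la^2/b)$, so already $b\gg\la^2$ is needed, and carrying the remaining scales (the sweep width $\sim\la$ of $t_j$ and the level-set exponent) through the estimate is what upgrades the requirement to $b\ge\la^3$. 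A secondary point is the control of $w_j$ right after the orbit leaves $B$, where the slope can be small and the naive bound degenerates; there one uses both that such times have density $\lesssim|B|$ and that the factor $\log|1-1/(t_jw_j)|$ is in fact \emph{positive} when $t_jw_j$ is small, so no extra loss is incurred.
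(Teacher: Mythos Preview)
Your route is genuinely different from the paper's. The paper never integrates a telescoping identity for $\log|q_n|$; it works pointwise in the projective coordinate $y=\arctan r$, iterates nearly-horizontal graphs under the fibered circle map $H_E$ (the ``geometry'' Lemma~\ref{L_geom}, in the spirit of Viana's admissible curves), shows that at every stage at most $(s+1)(2+[2^\beta\la^{-\beta/2}b])<[b/12]$ of the $b$ image graphs meet the strip $\{|r|<\sqrt\la\}$, and then uses a de~Moivre--Laplace bound (Lemma~\ref{L_prob1}) to conclude that a.e.\ orbit has at most $n/6$ such ``bad'' times, whence Lemma~\ref{L_bounds1} gives $|r_1\cdots r_n|\ge\la^{n/4}$. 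Your integral scheme, if it worked, would even yield the stronger $L(E)\ge\log\la-C_0$.

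The gap in your sketch is the decorrelation step ``$w_j$ varies by $O(\la/b)$ on a monotonicity branch of $T^j$''. In the $w$-coordinate the fiber map $r\mapsto t-1/r$ has derivative $1/r^2$, which is unbounded; your asserted Lipschitz constant $O(\la^2)$ holds only when every past slope satisfies $|w_k|\gtrsim 1/\la$, and you never close that induction. The moment some $w_k$ is small the variation of $w_{k+1}$ across the branch blows up, and the ``freeze $w_j\approx\bar w$'' argument for all later $j$ collapses. Your observation that the log-correction is \emph{positive} at the small-$w$ step itself controls the value of that single term but does nothing for the forward propagation of the derivative blow-up. The paper bypasses exactly this difficulty by passing to $\arctan r$, where the fiber derivative is bounded by $2\la^2$ \emph{uniformly} in $r\in\R\cup\{\infty\}$ (the estimate $\sin^2t+(a\sin t-\cos t)^2>1/(2\la^2)$ in the proof of Lemma~\ref{L_geom}); the branch-derivative recursion then contracts by the factor $2\la^2/b<1$ at every step, irrespective of visits to the elliptic region, and this is where $b\ge\la^3$ is actually used. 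If you try to repair your argument by segregating branches with clean history from those that have passed through small slopes, you are led straight to the paper's good/bad-cylinder bookkeeping rather than to a pure averaging estimate.
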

\begin{rem}(a) We do not aim for optimal conditions on any of the constants or required size of $b$ (as function of $\la$). 

(b) It would be very interesting to know if the statement of Theorem 1 holds true for a fixed (large) $b$ independent of $\lambda$. Unfortunately the method
we use in the proof requires $b$ to be (much) larger than $\lambda$. (Heuristically it should be more likely to have $L(E)>0$ the bigger is $\lambda$.)

(c) We would like to stress that the proof of Theorem 1 does not use the fact that we have a linear system. In fact one can extend it to non-linear
systems (as, e.g., we did in \cite{b1}) since what we analyze is the dynamics of forced circle diffeomorphisms (see the map (\ref{circle_diff})). However, since
there is an interest in the Schr\"odinger cocycle, and for (hopefully!) transparency, we perform our analysis for this explicit system.

(d) The proof is based on ideas developed in \cite{v}. Related problems (for systems which are not homotopic to the identity) are investigated in \cite{b1,y}. 
\end{rem}

\section{Preliminaries}

We adopt the following convention on the coupling constant $\lambda$: In the statements of the lemmas below (where applicable) we always assume (without explicitly stating so) that 
$\lambda>0$ is sufficiently large. 
There are only finitely many largeness conditions on $\la$, and they only depend on $v$. This will yield the constant $\la_0$ in the statement of Theorem 1. 

\subsection{Assumption on $v$} We assume from now on that $v\in\mathcal{V}(\T,\R)$ is fixed, and that $\ve_0,\beta$ and $s$ are as in Definition \ref{def_v}.
 Without loss of generality we assume, for simplicity, that $|v(x)|\leq 1/3$ for all $x\in\T$ (this only scales $\la$).

\subsection{Projective action} Since
$$
A_E(x)\binom{1}{r}=r\binom{1}{\la v(x)-E-1/r}
$$
we see that the cocycle $F_E$ induces an action on the projective space $\mathbb{P}^1(\R^2)$ (with coordinates $\binom{1}{r}$) given by
$$
G_E(x,r)=(T(x),\la v(x)-E-1/r).
$$

If $r_n=\pi_2(G_E^n(x,r)), n\geq 0,$ it is easy to verify that\footnote{Here $\pi_2$ denotes the projection $\pi_2(x,r)=r$.}
\begin{equation}\label{norm_prod}
A^{n+1}_E(x)\binom{1}{r}=\binom{r_{n}\cdots r}{r_{n+1}\cdots r}.
\end{equation}
Thus, if there for some parameter $E\in\R$ exists a set $X\subset \T$ of positive measure such that for each $x\in X$ there is an $r\in\R$ such that  
$$
\limsup_{n\to\infty}\frac{1}{n}\log\|r_n\cdots r\|\geq C,
$$
then it follows from (\ref{pointwise}) and (\ref{norm_prod}) that $L(E)\geq C$.   

\subsection{Bounds} 
\begin{lemma}\label{L_bounds0}
If $|E|\geq \la/3+2\sqrt{\la}$, then $L(E)\geq (\log\la)/2$.
\end{lemma}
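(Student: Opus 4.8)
The plan is to work directly with the projective recursion
\[
r_{n+1}=\la v(T^n(x))-E-\frac{1}{r_n},
\]
and to exhibit, for \emph{every} $x\in\T$, a forward orbit $\{r_n\}_{n\ge0}$ all of whose coordinates have absolute value at least $\sqrt{\la}$. By the growth criterion recorded just after (\ref{norm_prod}), producing such orbits on a set of positive measure (here, all of $\T$) forces $L(E)\ge(\log\la)/2$.

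First I would convert the hypothesis into a uniform lower bound on the driving term. Using the normalization $|v(y)|\le 1/3$ together with $|E|\ge \la/3+2\sqrt{\la}$, for every $y\in\T$ we get
\[
|\la v(y)-E|\ \ge\ |E|-\la/3\ \ge\ 2\sqrt{\la}.
\]
Writing $w_n:=\la v(T^n(x))-E$, this says $|w_n|\ge 2\sqrt{\la}$ at every step, uniformly in $x$ and $n$.

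The key (and essentially only) step is to show that the set $\{r\in\R:\ |r|\ge\sqrt{\la}\}$ is forward invariant under each iterate of the projective map. Indeed, if $|r_n|\ge\sqrt{\la}$ then $|1/r_n|\le 1/\sqrt{\la}$, so
\[
|r_{n+1}|=\Bigl|w_n-\frac{1}{r_n}\Bigr|\ \ge\ |w_n|-\frac{1}{\sqrt{\la}}\ \ge\ 2\sqrt{\la}-\frac{1}{\sqrt{\la}}\ \ge\ \sqrt{\la},
\]
where the last inequality holds for $\la\ge 1$. Choosing any $r_0$ with $|r_0|\ge\sqrt{\la}$ (possible for every $x$, since the region does not depend on $x$), an induction yields $|r_n|\ge\sqrt{\la}$ for all $n\ge 0$; in particular $r_n\neq0$, so the recursion is well defined throughout.

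Finally, multiplying these bounds gives $|r_n\cdots r_0|\ge \la^{(n+1)/2}$, hence
\[
\frac1n\log\|r_n\cdots r\|\ \ge\ \frac{n+1}{2n}\log\la\ \longrightarrow\ \frac{\log\la}{2}.
\]
By (\ref{pointwise}) and (\ref{norm_prod}) this lower bound on the growth of the coordinates of $A^{n+1}_E(x)\binom{1}{r_0}$ delivers $L(E)\ge(\log\la)/2$. I do not anticipate a genuine obstacle: the only points needing care are the uniformity of the estimate in $x$, which is automatic because the invariant region $\{|r|\ge\sqrt{\la}\}$ is independent of $x$, and the fact that the orbit avoids $r=0$, which follows from that same invariance.
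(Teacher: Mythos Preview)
Your proof is correct and follows essentially the same approach as the paper: both show that the region $\{|r|\ge\sqrt{\la}\}$ is forward invariant under the projective recursion (using $|\la v(x)-E|\ge 2\sqrt{\la}$ together with $|1/r|\le 1/\sqrt{\la}$), then obtain the product bound $|r_0\cdots r_n|\ge\la^{(n+1)/2}$ and conclude $L(E)\ge(\log\la)/2$ via (\ref{pointwise}) and (\ref{norm_prod}). The paper's version is just a terser rendition of exactly this argument.
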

\begin{proof}Take any $x\in \T$ and $|r|>\sqrt{\la}$, and define $r_n=\pi_2(G_E^n(x,r)), n\geq 1$.
We note that $|r_1|=|\la v(x)-E-1/r|\geq|E|-|\la v(x)|-|1/r|>\sqrt{\la}$. By induction we get
$|r\cdot r_1\cdots r_n|>\sqrt{\la}^{n+1}$ for all $n\geq 1$.
\end{proof}

Thus, in order to prove Theorem 1, we only need to consider $|E|<\la/3+2\sqrt{\la}$ (which of course is the cumbersome region).

\begin{lemma}\label{L_bounds1}
Assume that $|E|\leq \la/3+2\sqrt{\la}$. For any $N\geq 1$, if $x\in\T$ and $r\in\R\cup\{\infty\}$ are such that 
$|r_j|<\sqrt{\la}$ for at most $k$ indices $j$ in $[1,N]$, then
$$
|r_1\cdots r_N|\geq \sqrt{\la}^{N-3k} \text{ if } |r_N|\geq 1/\la; \quad |r_1\cdots r_{N+1}|\geq \sqrt{\la}^{N+1-3k} \text{ if } |r_N|< 1/\la.
$$ 
\end{lemma}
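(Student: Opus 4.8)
The plan is to run the recursion $r_{j+1}=\la v(T^{j}(x))-E-1/r_j$ coming from the definition of $G_E$, together with the a priori bound $|\la v(T^{j}(x))-E|\leq 2\la/3+2\sqrt{\la}$ (valid since $|v|\leq 1/3$ and $|E|\leq \la/3+2\sqrt{\la}$). Call an index $j$ \emph{good} if $|r_j|\geq \sqrt{\la}$ and \emph{bad} otherwise, and among the bad indices distinguish \emph{type A} ($1/\la\leq |r_j|<\sqrt{\la}$) from \emph{type B} ($|r_j|<1/\la$). A good index contributes a factor $\geq\sqrt{\la}$ to the product, and a type-A index still contributes $\geq 1/\la=\sqrt{\la}^{-2}$. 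The whole difficulty is concentrated in the type-B indices, where $|r_j|$ can be arbitrarily small and hence cannot be bounded below by any power of $\sqrt{\la}$ on its own.

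First I would show that a type-B index is self-correcting. If $|r_j|<1/\la$ then $1/|r_j|>\la$, so the recursion gives $|r_{j+1}|\geq 1/|r_j|-(2\la/3+2\sqrt{\la})>\la/3-2\sqrt{\la}\geq \sqrt{\la}$ for $\la$ large; thus index $j+1$ is automatically good. Moreover $|r_j r_{j+1}|\geq 1-|r_j|(2\la/3+2\sqrt{\la})>1/3-2/\sqrt{\la}\geq \sqrt{\la}^{-1}$. Consequently a type-B index is never immediately followed by another bad index, and I can pair it with its (good) successor to form a two-index block whose product is $\geq\sqrt{\la}^{-1}$.

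With this in hand I would partition the relevant index range into disjoint blocks of three kinds: single good indices (factor $\geq\sqrt{\la}$), single type-A indices (factor $\geq\sqrt{\la}^{-2}$), and type-B pairs $(j,j+1)$ (factor $\geq\sqrt{\la}^{-1}$). Writing $k_A,k_B$ for the numbers of type-A and type-B indices, so $k_A+k_B\leq k$, the number of unpaired good indices is $(\#\text{indices})-k_A-2k_B$, and multiplying the block estimates yields exactly $\sqrt{\la}^{\,(\#\text{indices})-3k_A-3k_B}\geq \sqrt{\la}^{\,(\#\text{indices})-3k}$. The two cases in the statement arise precisely from whether index $N$ is type B: if $|r_N|\geq 1/\la$ then $N$ is good or type A, every type-B pair stays inside $[1,N]$, and $\#\text{indices}=N$; if $|r_N|<1/\la$ then $N$ is type B and its pairing reaches $N+1$, so the product must be taken up to $N+1$ and $\#\text{indices}=N+1$.

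The main obstacle I anticipate is not any single inequality but the bookkeeping of this partition: I must verify that the type-B pairings never overlap (automatic, since a type-B index is always followed by a good index, so two type-B indices cannot be adjacent and no good index is claimed by two pairs), that the first index is handled consistently, and that the boundary index $N+1$ is invoked exactly when $|r_N|<1/\la$. A clean alternative is an induction on $N$ that peels one block off the front, reducing $(N,k)$ to $(N-1,k)$, $(N-1,k-1)$, or $(N-2,k-1)$ according to the type of the first index; I expect it to reproduce the very same two boundary cases.
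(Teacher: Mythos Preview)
Your proposal is correct and is essentially the same argument as the paper's, just written out in full: the paper's entire proof reads ``induction over $N$; note that if $|r_j|<1/\la$ then $|r_jr_{j+1}|=|r_j(\la v(x_j)-E)-1|>1/4$,'' which is exactly your type-B self-correction observation, and your block decomposition is simply an explicit unpacking of that induction (which you also mention as an alternative).
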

\begin{proof}The statement follows easily by induction over $N$. Note the following: If $|r_j|<1/\la$, then
$|r_jr_{j+1}|=|r_j(\la v(x_j)-E)-1|>1/4$.
\end{proof}

\subsection{Elementary probability}\label{subs_prob} 
We begin by defining some natural partitions of $\T$ (relative the transformation $T$). First, let
$$
I_j=\left[\frac{j-1}{b},\frac{j}{b}\right), \quad j=1,2,\ldots, b.
$$
Note that $I_1\cup I_2\cup\cdots \cup I_b=\T$ and $T(I_j)=\T$ for all $j$. 

Next we define the intervals 
$$
I_{j_1j_2\cdots j_n}=I_{j_1}\cap T^{-1}(I_{j_2})\cap\ldots \cap T^{-n+1}(I_{j_n}), \quad n\geq 2.
$$
Thus, by definition, if $x\in I_{j_1j_2\cdots j_n}$, then $x\in I_{j_1}$, $T(x)\in I_{j_2}, \ldots, T^{n-1}(x)\in I_{j_n}$; and 
clearly $I_{j_1\ldots j_n}\subset I_{j_1\ldots j_{n-1}}\subset \ldots\subset I_{j_1}$.
Note that the length of each interval $I_{j_1j_2\cdots j_n}$ is $b^{-n}$. 
Note also that for any  fixed interval $I_{j_1j_2\cdots j_n}$ the intervals $I_{j_1j_2\cdots j_nj}$, $1\leq j\leq b$, form a partition 
of $I_{j_1j_2\cdots j_n}$ into $b$ pieces of equal length $b^{-{(n+1)}}$.

In the following lemma we use the word "bad" to indicate that an interval does not have a certain property:
\begin{lemma} Let $1\leq q\leq b$ be an integer. Assume that $q$ of the intervals $I_j$ ($j=1,\ldots, b$) are bad.
Furthermore, for each interval $I_{j_1\ldots j_n}$ assume that $q$ of the intervals $I_{j_1\ldots j_n j}$ ($j=1,2,\ldots, b$)
are bad. Then for each $n\geq 1$ and $0\leq m\leq n$ the set 
$$
\{x\in\T: x\in I_{j_1\ldots j_n}  \text{ and exactly } m \text{ of the intervals } I_{j_1}, I_{j_1j_2}, \ldots, I_{j_1\ldots j_n} 
\text{ are bad}\}
$$
has measure $\binom{n}{m}\frac{q^m(b-q)^{n-m}}{b^n}$. 
\end{lemma}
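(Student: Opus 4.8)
The plan is to count the admissible addresses $(j_1,\ldots,j_n)$ for which exactly $m$ of the nested intervals $I_{j_1},I_{j_1j_2},\ldots,I_{j_1\ldots j_n}$ are bad, and then multiply by the common length $b^{-n}$ of a level-$n$ interval. Since the intervals $I_{j_1\ldots j_n}$, as the address ranges over all strings of length $n$, partition $\T$ into pieces of equal measure $b^{-n}$, and since the condition ``exactly $m$ of the nested intervals are bad'' depends only on the address of the level-$n$ interval containing $x$, the set in question is a disjoint union of full level-$n$ intervals. Its measure is therefore $N(n,m)\,b^{-n}$, where $N(n,m)$ denotes the number of addresses of length $n$ with exactly $m$ bad nested intervals. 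It thus suffices to prove that $N(n,m)=\binom{n}{m}q^m(b-q)^{n-m}$.

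First I would establish this by induction on $n$. For $n=1$ the claim is immediate: among $I_1,\ldots,I_b$ exactly $q$ are bad, so $N(1,1)=q$ and $N(1,0)=b-q$, matching the formula. For the inductive step I would exploit the recursive structure of the partition: having fixed a level-$n$ address $(j_1,\ldots,j_n)$, its $b$ children $I_{j_1\ldots j_n j}$ split into exactly $q$ bad and $b-q$ good ones by hypothesis. Hence a length-$(n+1)$ address with $m$ bad nested intervals arises either from a length-$n$ address with $m$ bad intervals followed by a good child ($b-q$ choices) or from one with $m-1$ bad intervals followed by a bad child ($q$ choices). This gives the recurrence
$$
N(n+1,m)=(b-q)\,N(n,m)+q\,N(n,m-1),
$$
which, combined with Pascal's rule $\binom{n+1}{m}=\binom{n}{m}+\binom{n}{m-1}$, propagates the closed form.

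Equivalently, and perhaps more transparently, I would phrase the argument probabilistically. Viewing Lebesgue measure as a probability measure on $\T$, let $B_k$ be the event that the level-$k$ interval containing a point $x$ is bad. The key observation is that, conditioned on the level-$(k-1)$ interval containing $x$, the point lands in each of its $b$ equal sub-intervals with probability $1/b$, and exactly $q$ of these are bad; thus $B_k$ occurs with conditional probability $q/b$ irrespective of the history $B_1,\ldots,B_{k-1}$. Consequently the indicators $\mathbf{1}_{B_1},\ldots,\mathbf{1}_{B_n}$ are independent Bernoulli$(q/b)$ variables, the number of bad nested intervals is Binomial$(n,q/b)$, and the asserted measure $\binom{n}{m}q^m(b-q)^{n-m}/b^n$ is precisely the corresponding binomial probability.

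The only real point requiring care --- and what I would regard as the crux --- is that the branching is genuinely \emph{uniform}: the hypothesis supplies exactly $q$ bad children for \emph{every} interval at \emph{every} level, with no dependence on which interval one currently sits in. This is exactly what makes the events $B_k$ independent (and not merely identically distributed) and lets the count factorize into the clean binomial form; without this uniformity one would obtain only a path-dependent product rather than the stated equality.
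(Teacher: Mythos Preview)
Your proof is correct and follows exactly the same approach as the paper: count the $n$-tuples $(j_1,\ldots,j_n)$ with exactly $m$ bad nested intervals and multiply by $b^{-n}$. The paper dispatches the count in one line (``Easy combinatorics''), whereas you supply the induction and the probabilistic reformulation; the latter are more detailed than necessary but entirely sound.
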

\begin{proof}Easy combinatorics. From the assumption it follows that there are $\binom{n}{m}q^m(b-q)^{n-m}$ $n$-tuples $(j_1,\ldots,j_n)\in\{1,\ldots,b\}^n$ such that
exactly $m$ of the intervals  $I_{j_1}, I_{j_1j_2}, \ldots, I_{j_1\ldots j_n}$ are bad. Using the fact that $|I_{j_1\ldots j_n}|=b^{-n}$ yields the result. 
\end{proof}

Moreover,
\begin{lemma}\label{L_prob1} 
With the same assumptions as in the previous lemma, the measure of the set
$$
M_n=\{x\in\T: x\in I_{j_1\ldots j_n}  \text{ and at most } [2(q/b)n] \text{ of the intervals } I_{j_1}, I_{j_1j_2}, \ldots, I_{j_1\ldots j_n} 
\text{ are bad}\}
$$
goes to $1$ as $n\to\infty$.
\end{lemma}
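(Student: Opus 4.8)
The plan is to recognize that the measure formula furnished by the previous lemma is \emph{exactly} a binomial distribution, and then to finish with a standard concentration estimate. Writing $p=q/b$, the previous lemma says that the measure of the set of $x$ for which precisely $m$ of the nested intervals $I_{j_1},\ldots,I_{j_1\ldots j_n}$ are bad equals $\binom{n}{m}p^m(1-p)^{n-m}$. Summing these weights over $0\le m\le [2(q/b)n]=[2pn]$ gives
$$
|M_n|=\sum_{m=0}^{[2pn]}\binom{n}{m}p^m(1-p)^{n-m}.
$$
In probabilistic language this is $\mathbb{P}(S_n\le [2pn])$, where $S_n=X_1+\cdots+X_n$ is a sum of independent Bernoulli variables, each equal to $1$ with probability $p$ (a ``bad'' interval occurs) and $0$ with probability $1-p$.

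Next I would bound the complementary tail. Since $\mathbb{E}(S_n)=pn$ and $S_n$ is integer-valued, the event $\{S_n>[2pn]\}=\{S_n\ge [2pn]+1\}$ forces $S_n>2pn$, hence $S_n-pn>pn$; therefore
$$
1-|M_n|=\mathbb{P}(S_n>[2pn])\le \mathbb{P}(|S_n-pn|>pn).
$$
Applying Chebyshev's inequality and $\mathrm{Var}(S_n)=np(1-p)$ yields
$$
1-|M_n|\le\frac{np(1-p)}{(pn)^2}=\frac{1-p}{pn}.
$$
Because $1\le q\le b$ forces $p\ge 1/b>0$, the right-hand side tends to $0$ as $n\to\infty$, so $|M_n|\to 1$, as claimed.

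There is essentially no hard step here: the content is simply the weak law of large numbers applied to the explicit binomial weights produced by the previous lemma, and the factor $2$ in the threshold $[2pn]$ is exactly what places it strictly above the mean $pn$ so that Chebyshev applies. The only points that require a little care are the bookkeeping with the floor function (using that $S_n$ is integer-valued to pass from $S_n\le 2pn$ to $S_n\le[2pn]$) and checking $p>0$ so that the bound $\frac{1-p}{pn}$ is meaningful, which holds since $q\ge 1$. One could equally quote the weak law of large numbers directly, but the Chebyshev estimate has the advantage of supplying the explicit rate $O(1/n)$ for free.
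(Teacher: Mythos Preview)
Your argument is correct. Both you and the paper first rewrite $|M_n|$ as the binomial tail $\sum_{m=0}^{[2pn]}\binom{n}{m}p^m(1-p)^{n-m}$ with $p=q/b$; the only difference is the concentration input used to conclude. The paper simply invokes the de Moivre--Laplace theorem, whereas you finish with Chebyshev's inequality. Your route is slightly more elementary and yields the explicit quantitative bound $1-|M_n|\le (1-p)/(pn)$, while the paper's CLT reference is a one-line citation but gives no rate. Either way the substance is the same: the threshold $[2pn]$ sits strictly above the mean $pn$, so any law-of-large-numbers type statement suffices.
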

\begin{proof}By the previous lemma we have (provided that $[2(q/b)n]\leq n$)
$$
|M_n|=\sum_{m=0}^{[2(q/b)n]}\binom{n}{m}\frac{q^m(b-q)^{n-m}}{b^n}=\sum_{m=0}^{[2(q/b)n]}\binom{n}{m}(q/b)^m(1-q/b)^{n-m}.
$$
Applying the de Moivre-Laplace theorem yields the result.
\end{proof}

\section{Geometry} 
We write the circle $S^1=[-\pi/2,\pi/2]/\sim$. A point $\binom{1}{r}$ in projective coordinates corresponds to $\arctan(r)\in S^1$.
Thus, in these coordinates the map $G_E$ becomes $H_E:\T\times S^1\to \T\times S^1$ given by 
\begin{equation}\label{circle_diff}
H_E(x,y)=(T(x),\arctan(\la v(x)-E-1/\tan(y))).
\end{equation}

The following lemma is crucial for the proof of Theorem 1 (compare with "admissible curves" in \cite{v}; and also the idea in \cite{v} that the image of an admissible curve
"spreads out" in the $y$-direction). Recall that the constants $\beta$ and $s$ come from the assumption on $v$.

\begin{lemma}\label{L_geom}
Assume that $|E|<\la/2$ and $b\geq \la^3$. Assume further that the function $\vf:[0,1]\to S^1$ is $C^1$ and satisfies 
$$|\vf'(x)|<\frac{\la\|v'\|}{b}(1+(2\la^2)/b+\ldots +(2\la^2)^m/b^m)$$ for all $x\in[0,1]$ and some integer $m\geq 0$. 
For $j=1,2,\ldots,b$, let $\vf^j:[0,1]\to S^1$ be defined by
$$\vf^j(x)=\arctan\left(\la v\left(\frac{x+j-1}{b}\right)-E-\frac{1}{\tan\left(\vf\left(\frac{x+j-1}{b}\right)\right)}\right).$$
Then the following hold:
\begin{enumerate}

\item There are at most $(s+1)(2+[2^\beta\la^{-\beta/2}b])$ indices $j\in\{1,\ldots,b\}$ for which $$\min_{x\in [0,1]}|\tan\vf^j(x)|<\sqrt{\la}.$$

\item The estimate $|(\vf^j)'(x)|<\frac{\la\|v'\|}{b}(1+(2\la^2)/b+\ldots +(2\la^2)^{m+1}/b^{m+1})$ holds for all $x\in[0,1]$ and all $j$. 
\end{enumerate}
\end{lemma}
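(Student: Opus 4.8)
The plan is to treat the two claims separately, obtaining (2) by a direct chain-rule computation and (1) by exploiting that the hypothesis $b\ge\la^3$ forces the input curve $\vf$ to be almost flat. Throughout I would write $\psi_j(x)=(x+j-1)/b$ (so $\psi_j'\equiv 1/b$) and let $h(x,y)=\arctan(\la v(x)-E-\cot y)$ be the fibre map, so that $\vf^j=h(\psi_j,\vf\circ\psi_j)$; I set $P_m=\sum_{k=0}^m(2\la^2/b)^k$, so the hypothesis reads $|\vf'|<(\la\|v'\|/b)P_m$ and the target in (2) is $(\la\|v'\|/b)P_{m+1}$, noting the recursion $P_{m+1}=1+(2\la^2/b)P_m$.

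For (2) I would differentiate $\vf^j(x)=h(\psi_j(x),\vf(\psi_j(x)))$. Writing $r=\tan\vf(\psi_j(x))$ and $\tilde r=\tan\vf^j(x)=\la v(\psi_j(x))-E-1/r$, the chain rule gives
$$(\vf^j)'(x)=\tfrac1b\Big(\partial_x h+\partial_y h\cdot\vf'(\psi_j(x))\Big),\qquad \partial_x h=\frac{\la v'(\psi_j(x))}{1+\tilde r^2},\quad \partial_y h=\frac{1+1/r^2}{1+\tilde r^2}.$$
The first partial is at most $\la\|v'\|$ in modulus. For the second, putting $a=\la v(\psi_j(x))-E$ (so $|a|\le 5\la/6$ by $|v|\le1/3$ and $|E|<\la/2$) and $1/r=a-\tilde r$, one finds $\partial_y h=1+\frac{a^2-2a\tilde r}{1+\tilde r^2}\le 1+a^2+|a|<2\la^2$ for large $\la$, using $|\tilde r|/(1+\tilde r^2)\le 1/2$. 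Hence $|(\vf^j)'|\le\frac1b(\la\|v'\|+2\la^2|\vf'(\psi_j)|)<\frac{\la\|v'\|}{b}(1+\frac{2\la^2}{b}P_m)=\frac{\la\|v'\|}{b}P_{m+1}$, which is (2). (At $r=0$ or $r=\infty$ the formulas are read off by continuity of $h$ on $S^1$.)

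For (1), observe that $j$ is bad exactly when there is $y\in I_j$ with $|F(y)|<\sqrt\la$, where $F(y)=\la v(y)-E-\cot\vf(y)$. Since $|\la v(y)-E|\le 5\la/6$, any such $y$ satisfies $|\cot\vf(y)|<\la$, i.e.\ bad points occur only where $\vf$ is bounded away from the pole $\{\vf=0\}$ of $\cot$. The decisive structural fact is that $b\ge\la^3$ and $P_m\le 2$ make $\vf$ nearly constant: its total variation is $\delta:=\int_0^1|\vf'|\le(\la\|v'\|/b)P_m\le 2\|v'\|/\la^2$, so $\vf([0,1])$ lies in a single arc $J\subset S^1$ with $|J|\le\delta$. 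I would then split on the position of $J$. If $J$ contains $0$, then $|\vf|\le\delta$ everywhere, hence $|\cot\vf|\ge 1/\tan\delta\gg\la$ and there are no bad points at all. Otherwise $J$ avoids $0$, $\cot$ is monotone on $J$, and on the sub-arc where $|\cot|<\la$ its derivative $1+\cot^2$ is $<1+\la^2$; thus $\cot\vf$ ranges over an interval of length at most $(1+\la^2)\delta\le 4\|v'\|$ on the bad set. From $v(y)=(E+\cot\vf(y)+F(y))/\la$ it follows that every bad $y$ has $v(y)$ in an interval of half-width $\ve^*\le 2\|v'\|/\la+1/\sqrt\la\le 2/\sqrt\la$. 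Applying Definition \ref{def_v} with $\ve=\ve^*\le\ve_0$ then confines the bad set to at most $s$ intervals, each of length at most $(\ve^*)^\beta\le 2^\beta\la^{-\beta/2}$. Since an interval of length $\ell$ meets at most $2+[\ell b]$ of the cells $I_j$, the number of bad indices is at most $s(2+[2^\beta\la^{-\beta/2}b])\le(s+1)(2+[2^\beta\la^{-\beta/2}b])$.

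The routine parts are the computation in (2) and the final cell count; the crux — and the step I expect to be most delicate — is the geometric reduction in (1): recognising that the flatness of $\vf$ forces $\cot\vf$ to be either uniformly huge (near the pole, which annihilates all bad points) or slowly varying (away from the pole), which is precisely what allows the level-set hypothesis of Definition \ref{def_v} to govern the bad set. Carefully tracking the pole of $\cot$ through the case split, and verifying the largeness-in-$\la$ thresholds so that simultaneously $\ve^*\le\ve_0$ and $\ve^*\le 2/\sqrt\la$, is where the real care is required.
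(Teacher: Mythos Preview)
Your argument is correct and follows essentially the same route as the paper: both proofs use the bound $|\vf'|<2\|v'\|/\la^2$ to make $\vf$ nearly constant, split on whether $\vf$ is near the pole of $\cot$ (the paper uses the threshold $|\vf|<1/\la$ while you use whether the range of $\vf$ contains $0$), and then invoke Definition~\ref{def_v}; for (2) your algebraic bound $\partial_y h<2\la^2$ is exactly the content of the paper's inequality $\sin^2 t+(a\sin t-\cos t)^2>1/(2\la^2)$. The only slip is that on $[0,1]$ the level set of $v$ can break into $s+1$ intervals rather than $s$ (one interval on $\T$ may wrap around), which is why the paper carries the factor $s+1$; your final bound already absorbs this.
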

\begin{rem}That the functions $\vf^j$ are defined as a above implies that if $\Gamma_j=\{(x,\vf(x)): x\in I_j\}$, then 
$H_E(\Gamma_j)=\{(x,\vf^j(x)):x\in [0,1)\}$. Thus, if $\Gamma=\{(x,\vf(x)): x\in [0,1)\}$ is the graph of $\vf$, we have 
$H_E(\Gamma)=\bigcup_{j=1}^b \{(x,\vf^j(x)):x\in [0,1)\}$, i.e., the union of the graphs of the $\vf^j$.

The statement thus says that we have a bound of the number of indices  $j$ for which the graph $\{(x,\vf^j(x)):x\in [0,1)\}$ intersects the ``bad region'' 
$[0,1]\times (-\arctan\sqrt{\la},\arctan\sqrt{\la})$;
and the derivative estimate shows that we can iterate this process (iterate each of the $j$ graphs) and still have a good control on the derivative 
(provided that $b$ is large enough, independently of $m$).

\end{rem}

\begin{proof} By the assumptions on $b$ and $\vf'$ we have $|\vf'|<(\la\|v'\|/b)\sum_{i=0}^\infty (2\la^2/b)^i<2\|v'\|/\la^2$. We let $g(x)=-1/\tan(\vf(x))$.

We first prove that the set $B:=\{x\in[0,1]:|\la v(x)-E+g(x)|<\sqrt{\la}\}$ 
can intersect at most $(s+1)(2+[2^\beta\la^{-\beta/2}b])$ of the intervals $I_j$ ($j=1,\ldots, b$). This clearly gives the first statement of the lemma.

If $|\vf(x_0)|<1/\la$ for some $x_0$, 
then the estimate on $|\vf'|$ implies that
$|\vf(x)|<1/\la+ 2\|v'\|/\la^2$ for all $x$; thus, since $|\la v(x)-E|\leq \la/3+\la/2$, we get $|\la v(x)-E+g(x)|>\sqrt{\la}$ for all $x\in [0,1]$. 
We conclude that  $B=\emptyset$ in this case. 

Assume now that $|\vf(x)|\geq 1/\la$ for all $x$.
Then we have $|g'(x)|=|\vf'(x)|/|\sin^2(\vf(x))|<(2\|v'\|/\la^2)(2\la^2)=4\|v'\|$, and therefore $|g(x_1)-g(x_0)|<4\|v'\|$ for all $x_0,x_1\in[0,1]$
Hence  $$B\subset \{x\in[0,1]:|\la v(x)-E+g(0)|<2\sqrt{\la}\}.$$ By the assumption on $v$
the latter set consists of at most $s+1$ intervals ($s$ intervals on $\T$ can be at most $s+1$ intervals in $[0,1]$), 
each of a length $<(2/\sqrt{\la})^\beta$. Since each interval $I_j$ ($j=1,\ldots, b$)
has length $1/b$, it follows that the set $B$ can intersect at most $(s+1)(2+[2^\beta\la^{-\beta/2}b])$ of them.

We turn to the derivative estimate in (2). From the assumptions on $v$ and $E$ we have $|\la v(t)-E|<\la$ for all $t$. An easy computation shows that
$|(\vf^j)'(x)|\leq (\la\|v'\|+\|\vf'\|2\la^2)/b$, from which the desired bound follows.
To obtain the estimate in the second term we have used the fact that if $|a|\leq \la$, and $\la$ is sufficiently large (larger than a numerical constant),  then 
$\sin^2t+(a\sin t-\cos t)^2>1/(2\la^2)$ for all $t$. 

\end{proof}

\section{Proof of Theorem 1}
By Lemma \ref{L_bounds0} we only need to consider $|E|<\la/2$. 
We therefore assume that $|E|<\la/2$ is fixed. We also fix $b\geq \la^3$ (so that we can apply Lemma \ref{L_geom}).
Given a point $(x,r)$ we denote by $r_j$ the iterate $r_j=\pi_2(G_E^j(x,r))$.

Let the intervals $I_{j_1\ldots j_n}$ be defined as in subsection \ref{subs_prob}. We say that the interval $I_{j_1\ldots j_n}$ ($n\geq 1$)
is "good" if for each $x\in I_{j_1\ldots j_n}$ and $r=\la$ we have $|r_n|\geq \sqrt\la$; otherwise the interval is "bad".

\begin{lemma}Assume that at most $q=[b/12]$ of the intervals $I_j$ $(j=1,\ldots, b)$ are bad, and that for each interval
$I_{j_1\ldots j_n}$ at most $q$ of the intervals $I_{j_1\ldots j_nj}$ $(j=1,\ldots, b)$ are bad. Then $L(E)\geq (\log\la)/4$.
\end{lemma}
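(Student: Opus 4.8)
The plan is to apply the projective criterion from Section~2.2: it suffices to exhibit a set $X\subset\T$ of positive measure and, for each $x\in X$, a value of $r$ for which $\limsup_{n\to\infty}\frac1n\log|r_n\cdots r|\geq(\log\la)/4$. Throughout I take the fixed choice $r=\la$ used in the definition of good/bad intervals, so that the sequence $r_1,r_2,\ldots$ attached to $x$ is precisely the one governing whether the intervals in the itinerary of $x$ are good or bad. Recall also that $r_0=r=\la$, so that by (\ref{norm_prod}) one has $|r_N\cdots r|=\la\,|r_1\cdots r_N|$.

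The first, and conceptually central, step is to translate the combinatorial bad-interval count into a bound on the number of small iterates $|r_j|$. Fix $x$ with itinerary $(j_1,j_2,\ldots)$, so that $x\in I_{j_1\ldots j_m}$ for every $m$. By the very definition of ``good'', if $I_{j_1\ldots j_m}$ is good then $|r_m|\geq\sqrt\la$; contrapositively, $|r_m|<\sqrt\la$ forces $I_{j_1\ldots j_m}$ to be bad. Hence
$$\#\{\,m\in[1,n]:|r_m|<\sqrt\la\,\}\ \leq\ \#\{\,m\in[1,n]:I_{j_1\ldots j_m}\text{ is bad}\,\}.$$
For $x\in M_n$ the right-hand side is at most $k:=[2(q/b)n]$. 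With $q=[b/12]$ we have $q/b\leq1/12$, so $k\leq n/6$ and therefore $3k\leq n/2$. Feeding this into Lemma~\ref{L_bounds1} (applicable since $|E|\leq\la/3+2\sqrt\la$ in the range left after Lemma~\ref{L_bounds0}) gives, for every $x\in M_n$, either $|r_1\cdots r_n|\geq\sqrt{\la}^{\,n-3k}\geq\la^{n/4}$ in the case $|r_n|\geq1/\la$, or the analogous bound $|r_1\cdots r_{n+1}|\geq\la^{n/4}$ in the case $|r_n|<1/\la$. Multiplying by $|r_0|=\la$ shows $\frac1N\log|r_N\cdots r|\geq(\frac{1}{4}+o(1))\log\la$ for $N=n$ (resp.\ $N=n+1$).

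It remains to upgrade this to a single limsup statement on a fixed set. By Lemma~\ref{L_prob1} we have $|M_n|\to1$; the hypothesis here is ``at most $q$'' bad intervals rather than ``exactly $q$'', but fewer bad intervals only decreases the chance of a bad-heavy itinerary, so a straightforward domination argument still yields $|M_n|\to1$. Setting $X=\limsup_n M_n=\bigcap_{N}\bigcup_{n\geq N}M_n$, reverse Fatou gives $|X|\geq\limsup_n|M_n|=1$, so $X$ has full measure. For each $x\in X$ there are infinitely many $n$ with $x\in M_n$, and along this subsequence the estimate of the previous paragraph produces terms of $\frac1N\log|r_N\cdots r|$ tending to a value $\geq(\log\la)/4$; hence the limsup is at least $(\log\la)/4$. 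The projective criterion then yields $L(E)\geq(\log\la)/4$.

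I expect the main obstacle to lie in the bookkeeping of the first step rather than in any deep estimate: one must correctly line up the index conventions (the products in Lemma~\ref{L_bounds1} run over $r_1,\ldots$, whereas the norm formula (\ref{norm_prod}) carries the extra factor $r_0=r$), and one must handle cleanly the two cases $|r_n|\geq1/\la$ and $|r_n|<1/\la$ in Lemma~\ref{L_bounds1} so that the \emph{limsup}, rather than the liminf, is what gets controlled. The passage from $|M_n|\to1$ to a fixed full-measure set on which the limsup holds is the only other delicate point, and it is resolved by the $\limsup_n M_n$ construction above.
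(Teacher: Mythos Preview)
Your argument is correct and follows essentially the same route as the paper's own proof: define $X=\limsup_n M_n$, use $|M_n|\to1$ to get $|X|=1$, and for $x\in M_n$ combine the good/bad bookkeeping with Lemma~\ref{L_bounds1} to obtain $|r_1\cdots r_N|\geq\la^{n/4}$ for $N\in\{n,n+1\}$, whence the limsup bound. If anything, you are slightly more careful than the paper in two places: you explicitly note (and dispatch by domination) the ``exactly $q$'' versus ``at most $q$'' discrepancy in the hypothesis of Lemma~\ref{L_prob1}, and you track the harmless extra factor $r_0=\la$ coming from~(\ref{norm_prod}).
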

\begin{proof}
Let the sets $M_n$ be defined as in Lemma \ref{L_prob1}, and let $\displaystyle M=\limsup_{n\to\infty} M_n$. Since $|M_n|\to 1$ as $n\to\infty$ we have that
$|M|=1$, i.e., the set $M$ has full measure.   

Take $x\in M_n$ ($n\geq 1$) and $r=\la$. Then $x\in I_{j_1\ldots j_n}\subset I_{j_1\ldots j_{n-1}}\subset \ldots\subset I_{j_1}$ and at most 
$[2(q/b)n]$ of the intervals $I_{j_1},\dots, I_{j_1\ldots j_n}$ are bad. Thus, by definition we get 
that $|r_k|<\sqrt{\la}$ for at most $[2(q/b)n]\leq n/6$ indices $k\in[1,n]$.
It hence follows from Lemma \ref{L_bounds1} that 
$|r_1\cdots r_n|\geq \sqrt{\la}^{n-3n/6}$ or $|r_1\cdots r_{n+1}|\geq \sqrt{\la}^{n+1-3n/6}$. 

Consequently, if $x\in M$ (and thus $x\in M_n$ for infinitely many $n$) and $r=\la$
we have 
$$
\limsup_{n\to\infty}\frac{1}{n}\log|r_1\cdots r_n|\geq (1/4)\log\la. 
$$
Recalling (\ref{pointwise}) and (\ref{norm_prod}) finishes the proof.
\end{proof}

Combining the previous lemma with the next one finishes the proof of Theorem 1.
\begin{lemma}At most $q=[b/12]$ of the intervals $I_j$ ($j=1,\ldots, b$) are bad; and for each interval
$I_{j_1\ldots j_n}$ at most $q$ of the intervals $I_{j_1\ldots j_nj}$ ($j=1,\ldots, b$) are bad.
\end{lemma}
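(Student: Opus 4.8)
The plan is to deduce the statement from repeated applications of Lemma \ref{L_geom} to the forward iterates of the constant graph $\vf_0\equiv\arctan\la$. First I would set up a dictionary between the combinatorial notion of a bad interval and the geometry of graphs. Put $\vf_0(x)=\arctan\la$ on $[0,1)$ and, for a multi-index $(j_1,\ldots,j_n)$, define $\vf_0^{j_1\cdots j_n}$ recursively by $\vf_0^{j_1\cdots j_n}=(\vf_0^{j_1\cdots j_{n-1}})^{j_n}$, where $\vf\mapsto\vf^j$ is the single-step operation of Lemma \ref{L_geom} and $\vf_0^{j_1}$ is obtained directly from $\vf_0$. A direct induction using the defining formula for $\vf^j$ and the remark after Lemma \ref{L_geom} shows that
$$\vf_0^{j_1\cdots j_n}(x)=\arctan\bigl(r_n(\xi)\bigr),$$
where $\xi$ is the unique point of $I_{j_1\cdots j_n}$ with $T^n(\xi)=x$ and $r_k=\pi_2(G_E^k(\xi,\la))$ is the orbit starting from $r_0=\la=\tan\vf_0$. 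In particular $\min_{x\in[0,1]}|\tan\vf_0^{j_1\cdots j_n j}(x)|<\sqrt\la$ holds exactly when some $\xi\in I_{j_1\cdots j_n j}$ has $|r_{n+1}(\xi)|<\sqrt\la$, which is precisely the definition of $I_{j_1\cdots j_n j}$ being bad (and likewise $I_j$ is bad iff $\min_x|\tan\vf_0^{j}(x)|<\sqrt\la$).

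Second, I would verify that every graph $\vf_0^{j_1\cdots j_n}$ satisfies the derivative hypothesis of Lemma \ref{L_geom}. The base case is immediate, since $\vf_0'\equiv 0$ obeys the bound with $m=0$. For the inductive step, part (2) of Lemma \ref{L_geom} states exactly that if $\vf$ satisfies the bound with exponent $m$ then each $\vf^j$ satisfies it with exponent $m+1$; hence $\vf_0^{j_1\cdots j_n}$ satisfies the hypothesis with $m=n$. Because $b\geq\la^3>2\la^2$, the geometric series $\sum_{i\geq 0}(2\la^2/b)^i$ converges, so in fact all of these derivatives are bounded uniformly (by $2\|v'\|/\la^2$) independently of $n$. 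This is the one place where the assumption $b\geq\la^3$ is essential.

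With the dictionary and the uniform derivative control in hand, the counting is immediate. Applying part (1) of Lemma \ref{L_geom} to $\vf_0$ bounds the number of bad $I_j$ by $(s+1)(2+[2^\beta\la^{-\beta/2}b])$; applying it to $\vf_0^{j_1\cdots j_n}$ bounds, for each fixed $I_{j_1\cdots j_n}$, the number of bad children $I_{j_1\cdots j_n j}$ by the same quantity. It then remains only to verify the numerical inequality
$$(s+1)\left(2+[2^\beta\la^{-\beta/2}b]\right)\leq [b/12]$$
for $\la$ large. Here the dominant term is $(s+1)2^\beta\la^{-\beta/2}b$, and since $(s+1)2^\beta\la^{-\beta/2}\to 0$ as $\la\to\infty$ it is eventually $<b/24$, while the additive constant $2(s+1)$ is negligible compared with $[b/12]\geq[\la^3/12]$; thus the left-hand side is eventually $<b/12$, which (being an integer) gives the claim. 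This is one of the finitely many largeness conditions on $\la$ permitted in Theorem 1.

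I expect the main (though modest) obstacle to lie in the bookkeeping of the first paragraph: making the correspondence precise so that iterating the single-step operation $\vf\mapsto\vf^j$ genuinely tracks the orbit $r_n$ over the nested intervals $I_{j_1\cdots j_n}$, and in particular checking that the reparametrization by $T^n$ is consistent with the affine rescaling built into the definition of $\vf^j$. Once that identification is secured, the uniform derivative bound and the final elementary estimate are routine.
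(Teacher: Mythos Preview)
Your proposal is correct and follows essentially the same route as the paper's own proof: start from the constant graph $\vf_0\equiv\arctan\la$, iterate the single-step operation $\vf\mapsto\vf^j$ of Lemma~\ref{L_geom}, use part (2) to propagate the derivative bound inductively, and then use part (1) together with the elementary inequality $(s+1)(2+[2^\beta\la^{-\beta/2}b])<[b/12]$ to count bad children. Your explicit ``dictionary'' identifying $\tan\vf_0^{j_1\cdots j_n}(x)$ with $r_n(\xi)$ for $\xi\in I_{j_1\cdots j_n}$ with $T^n(\xi)=x$ is exactly what the paper expresses via $H^{n+1}(I_{j_1\ldots j_nj}\times\{\arctan\la\})=\{(x,\vf_{j_1\ldots j_nj}(x)):x\in[0,1)\}$.
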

\begin{proof}
The strategy is to apply Lemma \ref{L_geom} inductively, and we shall begin by iterating the constant graph $\{(x,\arctan \la):x\in [0,1]\}$.

Note that $(s+1)(2+[2^\beta\la^{-\beta/2}b])<[b/12]$ if $\lambda$ is sufficiently large (recall that $b\geq \la^3$), where
the left hand side is the quantity in  Lemma \ref{L_geom}(1). Since $E$ is fixed we write $H=H_E$ and $G=G_E$.

For $j=1,2,\ldots, b$, let $\vf_j:[0,1]\to S^1$ be defined by
$$
\vf_j(x)=\pi_2\left(H((x+j-1)/b,\arctan{\la})\right).
$$
Applying Lemma \ref{L_geom} with $\varphi(x)=\arctan(\la)$ and $m=0$ shows that we for each $j$ have $|\vf_j'(x)|<\frac{\la\|v'\|}{b}(1+(2\la^2)/b)$
and that there are at most $[b/12]$ indices $j\in\{1,\ldots, b\}$ for which $\min_{x\in[0,1]}|\tan\vf_j(x)|<\sqrt{\la}$.
We note that 
$
H(I_{j}\times\{\arctan\la\})=\{(x,\vf_{j}(x)):x\in [0,1)\}, 
$
and thus $$G(I_{j}\times\{\la\})=\{(x,\tan(\vf_{j}(x))):x\in [0,1)\}.$$ Consequently, at most $[b/12]$ of the intervals $I_j$ are bad.
This proves the first statement of the lemma.

Inductively (over $n\geq 1$) we define, for fixed $(j_1,\ldots,j_n)\in\{1,\ldots, b\}^n$, the functions $\vf_{j_1\ldots j_n j}:[0,1]\to S^1$ by
$$
\vf_{j_1\ldots j_nj}(x)=\pi_2\left(H((x+j-1)/b,\vf_{j_1\ldots j_n}((x+j-1)/b))\right), \quad j=1,2,\ldots, b.
$$
Lemma \ref{L_geom} gives $|\vf_{j_1\ldots j_nj}'(x)|<\frac{\la\|v'\|}{b}(1+(2\la^2)/b+\ldots + (2\la^2)^{n+1}/b^{n+1})$ and that
there are at most $[b/12]$ indices $j\in\{1,\ldots, b\}$ for which $\min_{x\in[0,1]}|\tan\vf_{j_1\ldots j_nj}(x)|<\sqrt{\la}$.

From the above definition it is easy to verify that for each $n\geq 1$ and each fixed $(j_1,\ldots,j_n)\in \{1,\ldots, b\}^n$ we have
$$
\begin{aligned}
H^{n+1}(I_{j_1\ldots j_nj}\times\{\arctan\la\})&=\{(x,\vf_{j_1\ldots j_nj}(x)):x\in [0,1)\}; \text{ and } \\
G^{n+1}(I_{j_1\ldots j_nj}\times\{\la\})&=\{(x,\tan\vf_{j_1\ldots j_nj}(x)):x\in [0,1)\}.
\end{aligned}
$$
Thus, at most $[b/12]$ of the intervals $I_{j_1\ldots j_nj}$ ($1\leq j\leq b$) are bad. This finishes the proof of the lemma.

\end{proof}

\end{document}